\definecolor{Red}{rgb}{0.7,0,0.1}
\definecolor{Green}{rgb}{0,0.6,0}
\numberwithin{equation}{section}
\newtheorem{prop}{Proposition}[section]
\theoremstyle{remark}
\newtheorem{remark}{Remark}[section]
\title{Errata to Stochastic explosion and non-uniqueness for 
$\alpha$-Riccati equation}
\author{Radu Dascaliuc\thanks{Department of Mathematics,  Oregon State University, Corvallis, OR, 97331. {dascalir@math.oregonstate.edu}}
\and
Tuan N.\ Pham
 \thanks{Department of Mathematics,  Eastern Oregon
 University, La Grande, OR 97850. {tnpham@eou.edu}}
\and Enrique Thomann\thanks{Department of Mathematics,  Oregon State University, Corvallis, OR, 97331. {thomann@math.oregonstate.edu}}
\and
Edward C.\ Waymire\thanks{Department of Mathematics,  Oregon State University, Corvallis, OR, 97331. {waymire@math.oregonstate.edu}.}
}
\begin{document}

\maketitle

\begin{abstract}
An error occurs in a part of the statement and proof of Proposition 2.2 in the paper \emph{``Stochastic explosion and non-uniqueness for $\alpha$-Riccati equation''}, Jour. Math. Anal. and Appl., 476, (2019), 53-85 that is corrected in this errata. The revised result reveals a new and unexpected critical phenomena, having further implications for non-uniqueness of solutions to a nonlinear differential equation of the Riccati type.  

\end{abstract}

\section{Introduction}
\label{introsec}
The Proposition 2.2 in \cite{alphariccati} applies to
the $\alpha$-Riccati model, but the proof 
 is only valid
for $\alpha\in[0,1]\cup[2,\infty)$.  The error is in the
second displayed equation following (2.12)
 on p. 61 of \cite{alphariccati}.  The estimates prior to
 this line are valid and are used in this correction. 
 
Let us first recall some notations introduced in \cite{alphariccati}. The \emph{set of $t$-leaves} on a binary tree $\mathcal{I}=\{\emptyset\}\cup\cup_{n\in\mathbb{N}}\{1,2\}^n$ is defined as
$$V(t) = \left\{v\in\mathcal{I}: \sum_{j=0}^{|v|-1}\alpha^{-j}T_{v|j}
\le t < \sum_{j=0}^{|v|}\alpha^{-j}T_{v|j}\right\},\ t>0, 
\quad V(0) =\{\emptyset\}.$$
The \emph{shortest path} $S$ and the \emph{longest path} $L$ of the $\alpha$-Riccati cascade $\{\alpha^{-|v|}T_v\}_{v\in\mathcal{I}}$ are defined as
$$S=\lim_{n\to\infty}\min_{|v|=n}\sum_{j=0}^{n}\alpha^{-j}T_{v|j},\ \ \ L=\lim_{n\to\infty}\max_{|v|=n}\sum_{j=0}^{n}\alpha^{-j}T_{v|j}.$$
The $\alpha$-Riccati cascade is said to be \emph{exploding} if $S<\infty$ a.s.\ and \emph{hyper-exploding} if $L<\infty$ a.s. The value $\alpha=1$ is where the transition between non-explosion and explosion occurs \cite{alphariccati}. Specifically, $S=\infty$ a.s.\ if $\alpha\in(0,1]$ and $S\le L<\infty$ a.s.\ if $\alpha>1$. The correction provided
below reveals a critical region $(1,2)$ of branching
rates $\alpha$ that further classifies the explosion phenomenon. Specifically, for $\alpha\in(1,2)$, the hyperexplosion is weak enough that the set of $t$-leaves is infinite with a positive probability for every $t>0$. For $\alpha\ge 2$, the hyperexplosion is strong enough that the set of $t$-leaves is a.s.\ finite for every $t>0$. As a consequence, we obtain the non-uniqueness of solutions to the $\alpha$-Riccati equation with initial data $u_0=1$ in the case $\alpha\in(1,2)$.

To confirm the theoretical results, we present in Section \ref{simsection} some numerical simulations\footnote{The
 computations took advantage of the HPC cluster at Oregon State University.} with different values of $\alpha$  to depict the tail behavior in the distributions of the number of 
 $t$-leaves and the probability of having finitely many $t$-leaves. 

\section{Corrected Statement and Proof of Proposition 2.2}
The modified proof requires an 
observation
that 
for all $\alpha>1$, $\mathbb{P}(L>t)$ 
is integrable on $(0,\infty)$ so that
${\mathbb E}L<\infty$.
This integrability property had 
previously been proven in 
\cite{athreya} for the case $\alpha\ge 2$.  

The proofs of results in Section 4 that refer to the original
 Proposition 2.2 are mitigated by a revised statement
of  Proposition 4.1. This revision leaves the main Theorems 4.2
and 4.3 of  \cite{alphariccati} intact as stated and proven. 

\begin{prop}[Proposition 2.2 Revised]\label{prop2.2rev}
Let $\alpha\ge 0$ and consider the event
$$G_t=\{\text{zero or finitely many branches of the}
\ \alpha\text{-Riccati cascade 
 crossed}\ t\}.$$
Then, for any $t>0$, 
$$
\mathbb{P}(G_t)\quad \begin{cases} = 1 \ &
 \text{if}\quad \alpha\in[0,1]\cup[2,\infty)\\
 < 1 \ & \text{if}\quad \alpha\in(1,2).
 \end{cases}
 $$

\end{prop}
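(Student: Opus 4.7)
The proof splits naturally into three cases based on $\alpha$.

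For $\alpha\in[0,1]$ I would use a direct first-moment estimate. Since $\alpha^{-j}\geq 1$ in this range, the reach time $\tau(v)=\sum_{j=0}^{|v|-1}\alpha^{-j}T_{v|j}$ stochastically dominates a $\Gamma(|v|,1)$ variable, so $\mathbb{P}(v\in V(t))\leq \mathbb{P}(\tau(v)\leq t)\leq t^{|v|}/|v|!$. Summing over the $2^n$ vertices at level $n$ gives $\mathbb{E}|V(t)|\leq e^{2t}<\infty$, and hence $|V(t)|<\infty$ a.s.

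For $\alpha\geq 2$ the first-moment argument just barely fails at $\alpha=2$, and the newly established observation $\mathbb{E}L<\infty$ is exactly what is needed to close the gap. By the recursive self-similarity of the cascade, a level-$n$ vertex $v$ in the ancestral subtree of $V(t)$ contributes to $V(t)$ only when its independent subtree copy $L^{(v)}$ exceeds $\alpha^n(t-\tau(v))$. The integrability $\int_0^\infty\mathbb{P}(L>y)\,dy<\infty$ translates, via a dyadic comparison, into $\sum_n\alpha^n\mathbb{P}(L>\alpha^n c)<\infty$ for any $c>0$; for $\alpha\geq 2$ this dominates the crucial series $\sum_n 2^n\mathbb{P}(L>\alpha^n c)$, and a Borel--Cantelli argument applied across the branching structure (combined with an inductive treatment of the finitely many surviving contributions) yields $|V(t)|<\infty$ a.s.

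For $\alpha\in(1,2)$ I would instead produce a positive-probability event on which $|V(t)|=\infty$. Since $S<\infty$ a.s.\ for $\alpha>1$, the event $\{S<t\}$ has positive probability and supplies an infinite path $\pi$ with $\tau(\pi)<t$. The siblings $\{v_n'\}_{n\geq 1}$ of $\pi$ root independent cascades, and the sibling at level $n$ contributes to $V(t)$ exactly when $L^{(v_n')}>\alpha^n(t-\tau(v_n'))$. For $\alpha<2$ the combinatorial factor $2^n$ at successive generations outstrips the spatial scaling $\alpha^n$, so the expected number of contributing vertices across the entire branching structure is infinite; a branching-style second Borel--Cantelli argument then yields, with positive probability, infinitely many elements of $V(t)$, so $\mathbb{P}(G_t)<1$.

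The main obstacle is the critical boundary $\alpha=2$: here $\mathbb{E}|V(t)|=\infty$ so first-moment methods cannot work, yet a.s.\ finiteness is still required. The argument succeeds only because of the borderline integrability of $\mathbb{P}(L>t)$ on $(0,\infty)$—precisely the observation established in this errata—which supplies exactly the tail decay needed to close Borel--Cantelli at $\alpha=2$ while failing for $\alpha<2$, thereby producing the dichotomy.
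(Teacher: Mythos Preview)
Your first-moment bound for $\alpha\in[0,1]$ is fine, but the remaining two cases have genuine gaps, and the paper proceeds quite differently---analytically via the integral recursion for $q_n(t)=\mathbb{P}(C_n(t))$ rather than by Borel--Cantelli.

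For $\alpha\in(1,2)$, second Borel--Cantelli along a single exploding path $\pi$ actually fails: the $n$-th sibling subtree crosses $t$ with conditional probability at most $\mathbb{P}(L>\alpha^n c)$ where $c=t-\tau(\pi)>0$, and the very integrability $\int_0^\infty\mathbb{P}(L>y)\,dy<\infty$ you invoke forces $\sum_n\mathbb{P}(L>\alpha^n c)<\infty$, so by \emph{first} Borel--Cantelli only finitely many siblings contribute almost surely. Your appeal to ``the combinatorial factor $2^n$ across the entire branching structure'' is essentially the statement $\mathbb{E}|V(t)|=\infty$, which does not by itself give $\mathbb{P}(|V(t)|=\infty)>0$; there is no off-the-shelf ``branching second Borel--Cantelli'' here, since the relevant offspring law is both generation- and history-dependent. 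The paper instead assumes $q\equiv 0$ for contradiction, uses the newly proved $q_0=\mathbb{P}(L>\cdot)\in L^1(0,\infty)$ together with dominated convergence to get $\|q_n\|_1\downarrow 0$, deduces $\|q_n\|_\infty\to 0$ from the recursion, and then observes that once $\|q_n\|_\infty<2-\alpha$ the recursion forces $\|q_n\|_1$ to be strictly \emph{increasing}---a contradiction; a short propagation argument then upgrades $q\not\equiv 0$ to $q(t)>0$ for every $t$. For $\alpha\geq 2$ your series $\sum_n 2^n\mathbb{P}(L>\alpha^n c)$ only controls subtrees with residual time $t-\tau(v)\geq c$, and the ``inductive treatment'' of the remainder (where $t-\tau(v)$ can be arbitrarily small) is precisely the hard part you have not supplied; the paper bypasses this entirely by letting $t\to\infty$ in the integrated recursion to obtain $\int_0^\infty q^2\,dt=(2-\alpha)\int_0^\infty q\,dt\leq 0$, whence $q\equiv 0$ by continuity.
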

\begin{proof}
That $P(G_t) = 1$ for  $\alpha\in[0,1]\cup[2,\infty)$ is 
correctly asserted and proven in \cite{alphariccati}.
In particular, for $0\le\alpha\le 1$, the assertion follows
from Part 1 of Theorem 2.1 of \cite{alphariccati}. 
For $\alpha\ge 2$,
$q(t) = \mathbb{P}(G_t^c)\le \mathbb{P}(L>t)$ is integrable
and $\lim_{t\to\infty}q(t)=0$ by \cite{athreya}.  By
 letting $t\to\infty$ in (2.12) of \cite{alphariccati}, one obtains 
 $$ \int_0^\infty q^2(t)dt = (2-\alpha)\int_0^\infty q(t)dt,$$
 from which $\int_0^\infty q^2(t)dt=0$ follows for $\alpha\ge 2$. 
 Hence, $q=0$ by continuity.
 
 For the proof that $\mathbb{P}(G_t) < 1$ for  $\alpha\in(1,2)$, we
proceed as follows.
One has $q(t) = P(G_t^c) = P(|V(t)|=\infty)$ and
$$[|V(t)|=\infty]=\cap_{n=0}^\infty C_n(t),$$
where $C_n(t)$ denotes the event that there is a path crossing the horizon
$t$ at the generation $n$ or greater.  In particular,
$C_0(t) = [L>t]$. 
Note also that the sequence of events 
$\{C_n(t)\}$ is decreasing to $\cap_{n=0}^\infty C_n(t)
= [|V(t)|=\infty]$. Let 
$$q_n(t) = \mathbb{P}(C_n(t)), \quad t>0.$$
Then $q_0(t)=\mathbb{P}(C_0(t))=\mathbb{P}(L>t)$. The sequence $\{q_n(t)\}$ is a decreasing sequence and 
$q(t) = \lim_{n\to\infty}q_n(t)$.  For $n\ge 1$, by conditioning on $T_\emptyset$, one has
\begin{equation}\label{qrecur}
q_n(t) = \int_0^t e^{-(t-s)}(2q_{n-1}(\alpha s)-q_{n-1}^2(\alpha s))ds\;.
\end{equation}
We now show that $q_0$ is integrable on $(0,\infty)$. 
By the inheritance property of the event $[L\le t]$, $q_0(t)$ satisfies (2.10) of  \cite{alphariccati}, and therefore upon integrating and a change of variables,
\begin{eqnarray}
\nonumber\alpha q_0(t)-\alpha &=& \int_0^t((2-\alpha)q_0(s)-q_0^2(s))ds
+\int_t^{\alpha t}(2q_0(s)-q_0^2(s))ds\\
\label{q0eqn}&\ge& \int_0^t((2-\alpha)q_0(s)-q_0^2(s))ds.
\end{eqnarray}
Suppose by contradiction that $\int_0^\infty q_0(s)ds=\infty$.
Since $q_0(t)<{2-\alpha\over 2}$ for
all sufficiently large $t$, say $t>t_0$, it follows that
the right-hand side of the inequality \eqref{q0eqn}, 
denoted RHS\eqref{q0eqn}, satisfies
$$\textup{RHS}\eqref{q0eqn}
\ge \int_0^{t_0}\left((2-\alpha)q_0(s)-q_0^2(s)\right)ds
+{2-\alpha\over 2}\int_{t_0}^tq_0(s)ds\to\infty
\ \text{as}\ t\to\infty.$$ 
This contradicts the fact that 
the left-hand side of  \eqref{q0eqn} satisfies $\alpha q(t)-\alpha\to-\alpha$ as $t\to\infty$. Therefore, $\int_0^{\infty}q_0(s)ds<\infty$. 
Next, suppose for purpose
of contradiction that $q(t) =0$ for all $t>0$.  Then,
$q_n(t)\downarrow 0$ as $n\to\infty$ for each $t>0$.
Since $q_n\le q_0\in L^1(0,\infty)$, one has
$\|q_n\|_1\downarrow 0$ by Lebesgue Dominated
Convergence Theorem.  From (\ref{qrecur}), one has
$$q_n(t)\le\int_0^te^{-(t-s)}2q_{n-1}(\alpha s)ds
\le\int_0^t2q_{n-1}(\alpha s)ds\le{2\over\alpha}\|q_{n-1}\|_1,\ \forall n\ge 1.$$ Thus, $\|q_n\|_\infty\le {2\over\alpha}\|q_{n-1}\|_1\to0.$ There exists $N\in\mathbb{N}$ such that 
$$\|q_n\|_\infty< 2-\alpha\quad  \forall n\ge N.$$
In particular, for any $n> N$,
$$q_n(t) = \int_0^te^{-(t-s)}q_{n-1}(\alpha s)(2-q_{n-1}(\alpha s))ds> \int_0^te^{-(t-s)}\alpha q_{n-1}(\alpha s)ds.$$ 
Taking the $L^1$-norm on both sides yields
\begin{equation}
\|q_n\|_1> \|q_{n-1}\|_1, \ n \ge 1.
\end{equation}
Thus, $\|q_n\|_1$ is an increasing sequence for $n>N$, 
contradicting the fact that $\|q_n\|_1 \downarrow 0$. Hence, $q$ is not identically zero.
To see that $q(t)>0$ for all $t$, suppose by contradiction that 
$q(t_0) = 0$ for some $t_0>0$. Then 
$$0=q(t_0) = \int_0^{t_0}e^{-(t_0-s)}(2q(\alpha s)-
q^2(\alpha s))ds.$$
By the continuity of $q$, one has $q(s)=0$ for $s\in[0,\alpha t_0]$. 
In particular, $q(\alpha t_0) =0$. Repeating this argument with
$t_0$ replaced by $\alpha t_0$ yields $q(s)=0$ for
$s\in[0,\alpha^2 t_0]$, and so on.  Thus, $q=0$ on $[0,\infty)$, which is a contradiction.
\end{proof}

In view of Proposition 2.2 Revised,  Proposition 4.1 in \cite{alphariccati} is valid as stated for $\alpha\in[0,1]\cup[2,\infty)$. 
Theorem 4.2 only involves $\alpha>{5\over 2}>2$ and therefore
remains valid as stated and proven.  Theorem 4.1
is also valid as stated in view of the following revised statement
of Proposition 4.1. 
\begin{prop}[Proposition 4.1 Revised]\label{4.1revised} Let $\alpha>0$.
\begin{enumerate}
\item[(i)]\  If $\alpha\in[0,1]\cup[2,\infty)$ then Proposition 4.1 is valid as
stated.
\item[(ii)]\  If $\alpha\in(1,2)$ then one has the following statements.
\begin{enumerate}[(a)]
\item If $X_0=0$ in iteration (4.1), 
then $X_n(t)\to\underline{X}(t)<\infty$.
\item If $X_0=1$ in
iteration (4.1), then $X_n(t)\to\overline{X}(t)\le\infty$.
Moreover, ${\mathbb E}\overline{X}(t)=\infty$ if $u_0>1$,
while $\overline{u}(t)={\mathbb E}\overline{X}(t)<\infty$ is a
solution of (2.2) if $u_0\in[0,1]$. 
\end{enumerate}
\end{enumerate}
\end{prop}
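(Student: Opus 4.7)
The plan is to analyze the iteration (4.1) as a branching recursion on the Yule cascade tree, using pathwise monotonicity to obtain a.s.\ limits and the new positivity $\mathbb{P}(|V(t)|=\infty)>0$ from Proposition~\ref{prop2.2rev} to conclude. Part (i) requires no new argument: the original proof of Proposition 4.1 in \cite{alphariccati} only invokes the original Proposition 2.2 for $\alpha\in[0,1]\cup[2,\infty)$, where it is correctly stated and has been reaffirmed above.

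For part (ii), fix $\alpha\in(1,2)$. Writing (4.1) schematically as
$$X_{n+1}(t)\,=\,u_0\mathbf{1}_{T_\emptyset>t}+\mathbf{1}_{T_\emptyset\le t}\,X_n^{(1)}(\alpha(t-T_\emptyset))\,X_n^{(2)}(\alpha(t-T_\emptyset)),$$
the right-hand side is monotone in nonnegative inputs. Consequently $\{X_n(t)\}$ started from $X_0=0$ is a.s.\ nondecreasing, and started from $X_0=1$ it is a.s.\ nondecreasing if $u_0>1$ and nonincreasing if $u_0\in[0,1]$; in every case a pointwise limit exists in $[0,\infty]$ by monotone convergence.

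For (a), unrolling with $X_0=0$ along the cascade reveals the key combinatorial fact: any ray whose $V(t)$-node lies at depth $\ge n$ (or which has no $V(t)$-node at all) carries a depth-$n$ frontier factor equal to $X_0=0$, which kills the pathwise product. Hence $X_n(t)$ vanishes identically on $\{|V(t)|=\infty\}$ and is bounded above by $u_0^{|V(t)|}$ whenever $|V(t)|<\infty$, so $\underline{X}(t)\le u_0^{|V(t)|}\mathbf{1}_{|V(t)|<\infty}<\infty$ a.s. For (b), the same unrolling with $X_0=1$ produces the exact pathwise identity $X_n(t)=u_0^{|V(t)\cap\{|v|<n\}|}$, and since the exponent increases a.s.\ to $|V(t)|\in\{1,2,\ldots,\infty\}$, we conclude $X_n(t)\to\overline{X}(t)=u_0^{|V(t)|}$ a.s., with the natural convention $u_0^\infty\in\{0,1,+\infty\}$ according as $u_0\in[0,1)$, $u_0=1$, or $u_0>1$.

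The remaining claims then follow directly. If $u_0>1$, Proposition~\ref{prop2.2rev} gives $\mathbb{P}(|V(t)|=\infty)>0$; on this event $\overline{X}(t)=+\infty$, hence $\mathbb{E}\overline{X}(t)=+\infty$. If $u_0\in[0,1]$, then $\overline{X}(t)\le 1$ and $\overline{u}(t)=\mathbb{E}\overline{X}(t)\le 1$; taking expectations in (4.1) and passing to the limit by bounded convergence, after conditioning on $T_\emptyset$ and using independence of the two subtree copies of $X_n$, yields the integral equation~(2.2) for $\overline{u}$. The main obstacle is the combinatorial unrolling that produces the two identities for $X_n(t)$, with the subtlety that for $X_0=0$ any unresolved depth-$n$ branch kills the product while for $X_0=1$ it contributes neutrally; once that picture is in hand, the rest is routine, and the essentially new ingredient is the positivity $\mathbb{P}(|V(t)|=\infty)>0$ supplied by Proposition~\ref{prop2.2rev}.
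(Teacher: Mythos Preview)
Your argument is correct and proceeds by the same cascade--representation idea the errata is implicitly invoking: unrolling iteration (4.1) along the tree to obtain $X_n(t)=u_0^{|V(t)\cap\{|v|<n\}|}$ (respectively, this times an indicator for $X_0=0$), and then reading off the limits. The paper's own justification is a single sentence pointing back to the original proof of Proposition~4.1 in \cite{alphariccati} and noting that the step requiring $M_n\to\infty$ can simply be dropped for the weakened conclusion; your write-up makes that argument self-contained and explicitly identifies where Proposition~\ref{prop2.2rev} enters (namely, $\mathbb{P}(|V(t)|=\infty)>0$ forces $\mathbb{E}\,\overline{X}(t)=\infty$ when $u_0>1$).
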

\noindent The validity of this revised statement of Proposition 4.1 holds because
it does not require the limit $M_n\to\infty$ in the comment following
Remark 4.1.

\begin{remark}
As noted at the outset, the revised Proposition 4.1, i.e.\ Proposition \ref{4.1revised}, implies that in the case 
that $\alpha\in(1,2)$, the probability
$v(t)=\mathbb{P}(G_t)$ defines a solution to the 
$\alpha$-Riccati equation corresponding to the initial data $u_0=1$ that is distinct from both $\underline{u}(t)=\mathbb{P}(S>t)$, and 
$\bar{u}(t)\equiv 1$, with 
$\underline{u}(t)<v(t)<\bar{u}(t)$ for all $t>0$.
\end{remark}
\section{Numerical Results}\label{simsection}
In this section, two sets of graphical illustrations are provided to depict behavior consistent with the newly revealed phenomena.

For $\alpha\in(1,2)$, the total number of leaves $W(t)=|V(t)|$ is infinite with a positive probability. Consequently, $\mathbb{E}W(t)=\infty$ for each $t>0$. It is conceivable that the histogram of $W(t)$ should also have a heavy tail. The hyperexplosive nature of the tree makes direct simulations of $W(t)$ challenging. To mitigate this, a \emph{truncation} of $W(t)$ is simulated. Let $V_n(t)\subset V(t)$ be the subset of vertices of heights less than or equal to $n$, and $W_n(t)=|V_n(t)|$. Then by conditioning on $T$, one may easily see that
\[W_{n}(t)\stackrel{d}{=}\left\{ \begin{array}{*{35}{c}}
   1 & \text{if} & T>t  \\
   {{W}_{n-1}^{(1)}}(\alpha (t-T))+{{W}_{n-1}^{(2)}}(\alpha (t-T)) & \text{if} & T\le t  \\
\end{array} \right.\]
and $W_n(t)\uparrow W(t)$ as $n\to\infty$. Here, $T$ is a mean-one exponential random variable, and $W_{n-1}^{(1)}$ and $W_{n-1}^{(2)}$ are two i.i.d.\ copies of $W_{n-1}$. 
Figures \ref{fig1:a}, \ref{fig2:a}, \ref{fig3:a} show the histograms of $W_{n=10}(t=2)$ with 10000 experiments in the cases $\alpha =0.66$,
$\alpha=1.5$, and $\alpha = 3$, respectively.  The heavy tail in the case $\alpha=1.5$ is consistent with the ``weak'' hyperexplosive behavior of the random tree, i.e.\ $\mathbb{P}(G_t^c)>0$. 

In the second set of figures, we simulate the function $v(t)=1-q(t)=\mathbb{P}(G_t)$, which is the probability of getting finitely many $t$-leaves. This function solves the $\alpha$-Riccati equation $v^\prime+v=v^2(\alpha t)$ with the initial data $v(0)=1$. According to \eqref{qrecur}, $v(t)$ is the pointwise limit of $v_n(t)=1-q_n(t)$ and 
\[v_n(t)=e^{-t}+\int_0^te^{-s}v^2_{n-1}(\alpha (t-s))ds,\ \ \ v_0(t)=\mathbb{P}(L<t).\]
One can interpret this recursion probabilistically as $v_n=\mathbb{E}X_n$ where
\begin{equation}\label{xrecur}X_n(t)\stackrel{d}{=}\left\{ \begin{array}{*{35}{c}}
   1 & \text{if} & T>t  \\
   {{X}_{n-1}^{(1)}}(\alpha (t-T)){{X}_{n-1}^{(2)}}(\alpha (t-T)) & \text{if} & T\le t  \\
\end{array} \right.\end{equation}
and $X_0(t)=v_0(t)=\mathbb{P}(L<t)$. To get (approximately) $v_0(t)$, observe that it is the maximal solution to the equation
\[v_0(t)=\int_0^te^{-s}v_0^2(\alpha (t-s))ds\]
and thus is the limit of the sequence $U_k(t)$, where
\begin{equation}\label{arecur}U_k(t)=\int_0^te^{-s}U^2_{k-1}(\alpha (t-s))ds\end{equation}
and $U_0(t)=1$. Now for the simulation of $v(t)$, we discretize the time interval $[0,8]$ and simulate the approximation $v_{n=10}(t)$ at each grid point via a Monte Carlo algorithm using \eqref{xrecur}, where the initial process $X_0(t)=v_0(t)$ is approximated by $U_{k=5}(t)$, obtained analytically from \eqref{arecur}. Figures \ref{fig1:b}, \ref{fig2:b}, \ref{fig3:b} illustrate $v(t)$ in the cases $\alpha=0.66$, $\alpha =1.5$, and $\alpha=3$, respectively. The dots are the numerical results obtained by Monte Carlo simulation. The orange curves are the B\'{e}zier curves fitting the data. 

Various numerical solutions to the equation were computed by Nick Hale and Andr\'{e} Weideman at Stellenbosch University, South Africa, using a different numerical method than that presented here (personal
communication). In the range $1<\alpha<2$, one of their curves has a similar profile as that for
$\mathbb{P}(G_t)$ in Figure \autoref{fig2:b}. 
Not all of these numerical solutions 
are explained by the current theory.

\bibliography{NSF2022-3}
\begin{figure}[ht] 
    \centering
    \subfloat[]{%
        \includegraphics[scale=.55]{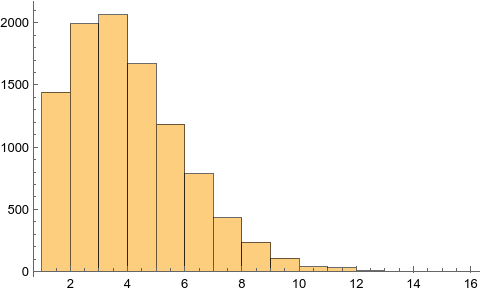}%
      \label{fig1:a}%
        }%
        \hspace{.1in}%
    \subfloat[]{%
        \includegraphics[scale=.55]{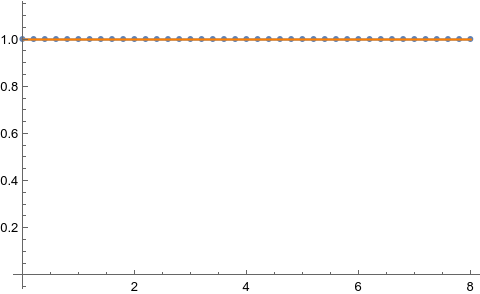}%
       \label{fig1:b}%
        }%
    \caption{The case $\alpha=0.66$}
    \label{a=0.66}
\end{figure}

\begin{figure}[ht] 
    \centering
    \subfloat[]{%
        \includegraphics[scale=.6]{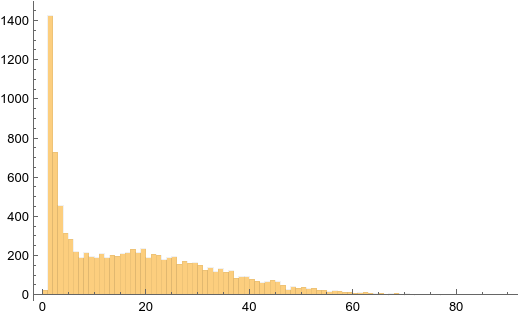}%
        \label{fig2:a}%
        }%
        \hspace{.1in}%
    \subfloat[]{%
        \includegraphics[scale=.5]{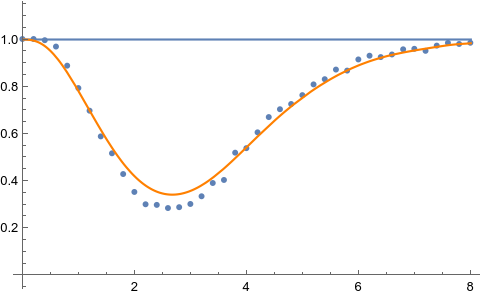}%
       \label{fig2:b}%
        }%
    \caption{The case $\alpha=1.5$}
    \label{a=1.5}
\end{figure}

\begin{figure}[ht] 
    \centering
    \subfloat[]{%
        \includegraphics[scale=.55]{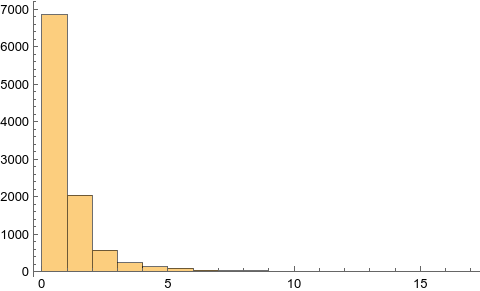}%
        \label{fig3:a}%
        }%
        \hspace{.1in}%
    \subfloat[]{%
        \includegraphics[scale=.55]{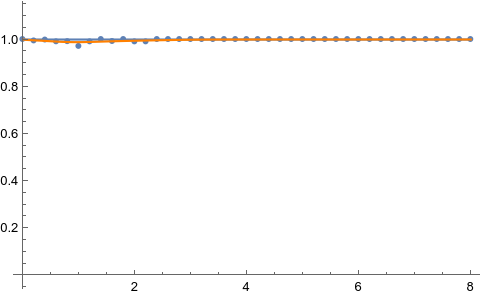}%
       \label{fig3:b}%
        }%
    \caption{The case $\alpha=3$}
    \label{a=3}
\end{figure}
\end{document}